\newtheorem{thrm}{Theorem}[section]
\newtheorem{corr}[thrm]{Corollary}
\newtheorem{lemm}[thrm]{Lemma}
\title{\textbf{Some Formulae of Genocchi Polynomials\\ of Higher Order}}
\author{\textbf{Cristina B. Corcino}\\{\large\bf Roberto B. Corcino}\\{Research Institute for Computational}\\ {Mathematics and Physics}\\Cebu Normal University\\Cebu City, Philippines \vspace{9pt}\\   {\large\bf Joy Ann A. Ca\~nete} \\ Mathematics Department \\ Visayas State University\\ Baybay City, Philippines}
\begin{document}
\maketitle
\begin{abstract}
In this paper, some formulae for Genoochi polynomials of higher order are derived using the fact that sets of Bernoulli and Euler polynomials of higher order form basis for the polynomial space. 
\end{abstract}

\section{Introduction}
A variety of polynomials, their extensions, and variants, have been extensively investigated, mainly due to their potential applications in diverse research areas. Motivated by their importance and potential for applications in a variety of research fields, numerous polynomials and their extensions have recently been introduced and investigated. One of these polynomials is the Genocchi polynomials that have been extensively studied in many different context in such branches of mathematics, for instance, in elementary number theory, complex analytic number theory, calculus and many more. One application of this polynomial is to study a matrix formulation and formulas obtained are used as check formulas of similar formula in \cite{ref4}. One paper of S. Araci in \cite{ref5} deals with the applications of umbral calculus on fermionic $p$-adic integral on $\mathbb{Z}_p$ and from these, S. Araci derived some new identities on Genocchi numbers and polynomials. Another paper that presents new numerical method for solving fractional differential equations was based on Genocchi polynomials operational matrix through collocation method in the paper of A. Isah and C. Phang (see \cite{ref8}), these polynomial properties are utilized to reduce the given problems to a system of algebraic equations. This also helps other researchers to do further researches for Genocchi polynomials and Genocchi polynomials of order $k$ for it has limited literature.

\smallskip
The history of Genocchi numbers can be tracked back to Italian mathematician Angelo Genocchi (1817-1889). From Genocchi to the present time, Genocchi numbers have been extensively studied in many different contexts in mathematics . Many studies and literature provide relations of Genocchi numbers to Bernoulli and Euler numbers, especially Euler numbers. Bernoulli, Euler and Genocchi numbers defined by exponential generating function (see \cite{ref1, ref3, ref4})
\begin{align}
\sum_{n=0}^{\infty}B_n\frac{t^n}{n!}&=\frac{t}{e^t-1},\;\;\; |t|<2\pi\label{eqn01}\\
\sum_{n=0}^{\infty}E_n\frac{t^n}{n!}&=\frac{2}{e^t+1},\;\;\; |t|<\pi\label{eqn02}\\
\sum_{n=0}^{\infty}G_n\frac{t^n}{n!}&=\frac{2t}{e^t+1},\;\;\; |t|<\pi\label{eqn03}.
\end{align}
The Bernoulli, Euler and Genocchi polynomials are defined via generating functions to be, respectively,
\begin{align}
\sum_{n=0}^{\infty}B_n(x)\frac{t^n}{n!}&=\frac{t}{e^t-1}e^{xt},\;\;\; |t|<2\pi\label{eqn001}\\
\sum_{n=0}^{\infty}E_n(x)\frac{t^n}{n!}&=\frac{2}{e^t+1}e^{xt},\;\;\; |t|<\pi\label{eqn002}\\
\sum_{n=0}^{\infty}G_n(x)\frac{t^n}{n!}&=\frac{2t}{e^t+1}e^{xt},\;\;\; |t|<\pi\label{eqn003},
\end{align}
where, when $x = 0$, $B_n(0) = B_n$, $E_n(0) = E_n$ and $G_n(0) = G_n$. (see \cite{ref3, ref4, ref7, ref9}). Bernoulli and Euler polynomials have been extensively studied by various researchers, one specific paper from D.S. Kim and et al. \cite{ref9}, that deals with some formulae of the product of two Bernoulli and Euler polynomials. This was then extended by Araci et al. \cite{ref4} to Genocchi polynomial. 

\smallskip
It is known that the Bernoulli and Euler Polynomials of order $k$ are defined respectively by the generating function
\begin{align}
\sum_{n=0}^{\infty}B_n^{k}(x)\frac{t^n}{n!}&=\left(\frac{t}{e^t-1}\right)^ke^{xt},\;\;\; |t|<2\pi\label{eqn0001}\\
\sum_{n=0}^{\infty}E_n^{k}(x)\frac{t^n}{n!}&=\left(\frac{2}{e^t+1}\right)^ke^{xt},\;\;\; |t|<\pi.\label{eqn0002}
\end{align}
In the special case, $x = 0$, $B_n^k(0) = B_n^k$ and $E_n^k(0) = E_n^k$ are called the 
Bernoulli and Euler numbers of order $k$, respectively. From (\ref{eqn0001}) and (\ref{eqn0002}), we have $B_n^0(x) = x^n$ and $E_n^0(x) = x^n$. It is then not difficult to show that (see \cite{ref10, ref12} ), for Bernoulli,
$$\frac{d}{dx}B_n^k(x)=nB_{n-1}^k(x)$$
and
$$B_{n}^k(x+1)-B_{n}^k(x)=nB_{n-1}^{k-1}(x).$$
For Euler,
$$\frac{d}{dx}E_n^k(x)=nE_{n-1}^k(x)$$
and
$$E_{n}^k(x+1)+E_{n}^k(x)=2E_{n-1}^{k-1}(x).$$
In the paper of D.S. Kim and T. Kim \cite{ref10, ref12}, they introduced
$$\mathcal{P}_n=\left\{p(x)\in\mathbb{Q}[x]:\deg p(x)\leq n\right\}$$
to be the $(n +1)$-dimensional vector space over $\mathbb{Q}$. Probably, $\{1,x,\ldots, x_n\}$ is the most natural basis for $\mathcal{P}_n$ but
$$\left\{B_{0}^k(x), B_{1}^k(x), \ldots, B_{n}^k(x)\right\} \;\;\mbox{and}\;\;\left\{E_{0}^k(x), E_{1}^k(x), \ldots, E_{n}^k(x)\right\}$$
are also good bases for the space $\mathcal{P}_n$ , (see \cite{ref10, ref12}. Then $p(x)$ can be expressed by
$$p(x)=\sum_{j=0}^na_jB_{j}^k(x)$$
and
$$p(x)=\sum_{j=0}^nb_jE_{j}^k(x).$$
From this, D.S Kim and T. Kim develop some interesting identities of Bernoulli and Euler Polynomials, (see \cite{ref10, ref12}.

\smallskip
Araci et al. \cite{ref3} introduced the Genocchi polynomial of higher order defined by the relation,
\begin{equation}\label{eqn1}
\sum_{n=0}^{\infty}G_n^{k}(x)\frac{t^n}{n!}=\left(\frac{2t}{e^t+1}\right)^ke^{xt}, \;\;\;k\in\mathbb{Z}^+, \;\;\;|t|<\pi.
\end{equation}
Note that when $k = 1$, (\ref{eqn1}) reduces to (\ref{eqn003}), hence $G^{(1)}_n(x) = G_n(x)$. In a separate paper, Araci et al. \cite{ref4} established new formulae for Genocchi polynomials including Euler polynomials and Bernoulli polynomials.  These properties were derived from the basis of Genocchi which leads Araci et al. \cite{ref3} to obtain new identities and extended it to Bernoulli and Euler polynomials.

\smallskip
In this paper, new formulae of Genocchi polynomial of order $k$ are established, which are parallel to those in \cite{ref4}. Furthermore, derived results are presented in terms of Bernoulli and Euler polynomials especially the latter polynomial since Genocchi polynomials are closely related to it.

\section{Preliminary Results}
In this section, some identities of Genocchi polynomials of higher order are obtained, which are parallel to those of Bernoulli and Euler polynomials of higher order. Also, relationship between Euler numbers of higher order and Genocchi numbers of higher order is obtained.

\smallskip
When $x = 0$, \eqref{eqn1} becomes
\begin{equation}\label{eqn2}
\sum_{n=0}^{\infty}G_n^{k}\frac{t^n}{n!}=\left(\frac{2t}{e^t+1}\right)^k,
\end{equation}
where $G^k_n$ are called higher order Genocchi numbers. When $k = 1$, \eqref{eqn2} reduces to \eqref{eqn03}.

\smallskip
Now, differentiating both sides of \eqref{eqn1} gives
\begin{align*}
\sum_{n=0}^{\infty}\frac{d}{dx}G_n^k(x)\frac{t^n}{n!}&=\left(\frac{2t}{e^t+1}\right)^kte^{xt}\\
&=\sum_{n=0}^{\infty}G_n^{k}\frac{t^{n+1}}{n!}=\sum_{n=0}^{\infty}nG_{n-1}^{k}(x)\frac{t^{n}}{n!}.
\end{align*}
Hence,
\begin{equation}\label{eqn3}
\frac{d}{dx}G_n^k(x)=nG_{n-1}^{k}(x).
\end{equation}
Since the differentiation of Genocchi polynomial of higher order exists, it follows that it also has integration, since it is commonly understood that differentiation and integration have an inverse relationship. So from (\ref{eqn3}), 
$$\int\frac{d}{dx}G_n^k(x)=\int nG_{n-1}^{k}(x),$$
which gives
\begin{align*}
G_n^k(x)&=n\int G_{n-1}^{k}(x)\\
\frac{G_n^k(x)}{n}&=\int G_{n-1}^{k}(x).
\end{align*}
Thus,
$$\int G_{n}^{k}(x)=\frac{G_{n+1}^k(x)}{n+1}.$$
That is,
\begin{equation}\label{eqn4}
\int_a^b G_{n}^{k}(x)=\frac{G_{n+1}^k(b)-G_{n+1}^k(a)}{n+1}.
\end{equation}
The following lemma contains an expression of $G_{n}^{k}(x)$ as polynomial in $x$ with  Genocchi numbers of higher order as the coefficients.
\begin{lemm}
The Genocchi polynomials of higher order satisfy the following relation
\begin{equation}\label{eqn5}
G_{n}^{k}(x)=\sum_{m=0}^n\binom{n}{m}G_{n}^kx^{n-m}.
\end{equation}
\end{lemm}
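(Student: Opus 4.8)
The plan is to read off this expansion directly from the generating function in \eqref{eqn1} by factoring the right-hand side into a product of two series and then matching coefficients. Concretely, I would start from
\begin{equation*}
\sum_{n=0}^{\infty}G_n^{k}(x)\frac{t^n}{n!}=\left(\frac{2t}{e^t+1}\right)^ke^{xt},
\end{equation*}
and observe that the factor $\left(\frac{2t}{e^t+1}\right)^k$ is, by the definition of the higher-order Genocchi numbers in \eqref{eqn2}, nothing other than $\sum_{m=0}^{\infty}G_m^{k}\frac{t^m}{m!}$, while the exponential factor has the standard expansion $e^{xt}=\sum_{j=0}^{\infty}x^{j}\frac{t^{j}}{j!}$.

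Next I would multiply these two power series together. Since both are written as exponential generating functions, their product is governed by the Cauchy product rule for such series: the coefficient of $\frac{t^n}{n!}$ in a product $\left(\sum_m a_m\frac{t^m}{m!}\right)\left(\sum_j c_j\frac{t^j}{j!}\right)$ is $\sum_{m=0}^{n}\binom{n}{m}a_m c_{n-m}$. Applying this with $a_m=G_m^{k}$ and $c_j=x^{j}$ gives the coefficient
\begin{equation*}
\sum_{m=0}^{n}\binom{n}{m}G_m^{k}x^{n-m}
\end{equation*}
on the right-hand side for each $n$.

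Finally, I would equate this with the coefficient of $\frac{t^n}{n!}$ on the left-hand side, which is exactly $G_n^{k}(x)$, and conclude the claimed identity. (I note in passing that the subscript on the Genocchi number inside the sum should read $G_m^{k}$ rather than $G_n^{k}$, since it is the index of summation that is carried by the number; I would present the corrected form.)

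There is no serious obstacle here, as the argument is a routine manipulation of formal power series. The only point requiring a little care is the bookkeeping in the Cauchy product: one must keep the factorials organized so that the binomial coefficient $\binom{n}{m}$ emerges correctly from combining $\frac{1}{m!}$ and $\frac{1}{(n-m)!}$ against the normalizing $n!$. Justifying the term-by-term multiplication is immediate in the ring of formal power series, so no convergence issues need to be addressed beyond the radius of validity $|t|<\pi$ already stated in \eqref{eqn1}.
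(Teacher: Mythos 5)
Your proof follows essentially the same route as the paper's own: write the right-hand side of \eqref{eqn1} as the product of the higher-order Genocchi number series from \eqref{eqn2} with $e^{xt}$, take the Cauchy product of the two exponential generating functions, and compare coefficients of $\frac{t^n}{n!}$. You are also correct to flag the index typo: the identity should read $G_n^{k}(x)=\sum_{m=0}^{n}\binom{n}{m}G_m^{k}x^{n-m}$ (the paper's statement writes $G_n^k$ and its proof writes $G_{n-m}^{k}x^{n-m}$, both of which are slips of the same kind you caught).
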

\begin{proof}
Using equation \eqref{eqn1}, 
\begin{align*}
\sum_{n=0}^{\infty}G_n^{k}(x)\frac{t^n}{n!}&=\sum_{n=0}^{\infty}G_n^{k}\frac{t^n}{n!}\sum_{n=0}^{\infty}\frac{(xt)^n}{n!}\\
&=\sum_{n=0}^{\infty}\left\{\sum_{m=0}^{n}\binom{n}{m}G_{n-m}^{k}x^{n-m}\right\}\frac{t^n}{n!}.
\end{align*}
Comparing the coefficients of $\frac{t^n}{n!}$ completes the proof of the lemma.
\end{proof}

\begin{lemm}\label{lem1}
The following relation holds
\begin{equation}\label{eqn6}
G^k_n(x) = (-1)^{n+k}G^k_n(k - x)
\end{equation}
such that, when $x=1$,
\begin{equation}\label{eqn7}
G^k_n(1) = (-1)^{n+k}G^k_n(k - 1).
\end{equation}
\end{lemm}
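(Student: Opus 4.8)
The plan is to work entirely at the level of the exponential generating function in \eqref{eqn1} and extract the identity by comparing coefficients of $t^n/n!$. The central device is the substitution $t \mapsto -t$, which simultaneously produces the factor $(-1)^n$ on the polynomial side and converts the kernel $\left(\frac{2t}{e^t+1}\right)^k e^{xt}$ into a shifted and sign-adjusted copy of itself.

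First I would replace $t$ by $-t$ throughout \eqref{eqn1}, so that the left-hand side becomes $\sum_{n=0}^{\infty}(-1)^n G_n^{k}(x)\frac{t^n}{n!}$ and the right-hand side becomes $\left(\frac{-2t}{e^{-t}+1}\right)^k e^{-xt}$. The key computation is to rewrite the kernel: multiplying numerator and denominator of $\frac{-2t}{e^{-t}+1}$ by $e^t$ gives $\frac{-2t\,e^t}{e^t+1}$, and therefore
\begin{equation*}
\left(\frac{-2t}{e^{-t}+1}\right)^k e^{-xt}=(-1)^k\left(\frac{2t}{e^t+1}\right)^k e^{kt}e^{-xt}=(-1)^k\left(\frac{2t}{e^t+1}\right)^k e^{(k-x)t}.
\end{equation*}
By \eqref{eqn1} applied with argument $k-x$, the right-hand side equals $(-1)^k\sum_{n=0}^{\infty}G_n^{k}(k-x)\frac{t^n}{n!}$.

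Equating the two series and comparing coefficients of $t^n/n!$ then yields $(-1)^n G_n^{k}(x)=(-1)^k G_n^{k}(k-x)$, and multiplying through by $(-1)^n$ (using $(-1)^{k-n}=(-1)^{k+n}$) gives exactly \eqref{eqn6}. The statement \eqref{eqn7} follows immediately by setting $x=1$ in \eqref{eqn6}.

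The only nontrivial step is the algebraic simplification of the kernel under $t\mapsto -t$, specifically recognizing that clearing $e^{-t}$ in the denominator introduces the compensating factor $e^{kt}$, which is precisely what shifts the argument from $x$ to $k-x$. Everything else is bookkeeping with signs and a routine coefficient comparison; no convergence subtleties arise since these are formal (or locally convergent) power series identities.
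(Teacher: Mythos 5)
Your proposal is correct and is essentially the paper's own argument: both rest on the substitution $t\mapsto -t$ together with the identity $\frac{-2t}{e^{-t}+1}=\frac{-2te^{t}}{e^{t}+1}$, whose factor $e^{kt}$ shifts the argument to $k-x$, followed by comparison of coefficients of $t^n/n!$. The only cosmetic difference is the order of operations (the paper first replaces $x$ by $k-x$ and then negates $t$, while you negate $t$ first and recognize the resulting kernel as the generating function at $k-x$), which yields the same identity.
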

\begin{proof}
By replacing $x$ with $k-x$, equation \eqref{eqn1} gives
$$\sum_{n=0}^{\infty}G_n^{k}(k-x)\frac{t^n}{n!}=\left(\frac{2t}{e^t+1}\right)^ke^{kt}e^{-xt}.$$
Replacing $t$ with $-t$ yields
\begin{align*}
\sum_{n=0}^{\infty}(-1)^nG_n^{k}(k-x)\frac{t^n}{n!}&=\left(\frac{-2t}{e^{-t}+1}\right)^ke^{-kt}e^{xt}\\
&=(-1)^k\left(\frac{2t}{e^{t}+1}\right)^ke^{xt}\\
&=(-1)^k\sum_{n=0}^{\infty}G_n^{k}(x)\frac{t^n}{n!}.
\end{align*}
Comparing the coefficients of $\frac{t^n}{n!}$ gives \eqref{eqn6}.
\end{proof}

\smallskip
Kim, et al. \cite{ref9} introduced the set 
$$\{E_0(x), E_1(x), \ldots , E_n(x)\} \;\;\mbox{and}\;\; \{B_0(x), B_1(x), \ldots , B_n(x)\}$$ 
as Euler and Bernoulli basis for the space of polynomials of degree less than or equal to $n$ with coefficients in $\mathbb{Q}$, respectively. Since Bernoulli and Euler polynomials are closely related to Genocchi polynomials, the work in \cite{ref9} was then extended to Genocchi polynomial by $S$.
Araci, et al. \cite{ref4} who introduced 
$$\mathcal{P}_n = \{p(x) \in \mathbb{Q}[x] : \deg p(x) \leq n\}$$ 
to be the $(n + 1)$-dimensional vector space over $\mathbb{Q}$ and obtained 
$$\{G_1(x), G_2(x), \ldots , G_n(x), G_{n+1}(x)\}$$ 
as a good basis for space $\mathcal{P}_n$. From this, 
$$\left\{G_k^k(x), G_{k+1}^k(x), \ldots , G_{n+k-1}^k(x), G_{n+k}^k(x)\right\}$$ 
is also a good basis for the space $\mathcal{P}_n$ as stated in the work of Araci, et al. \cite{ref3}.

\smallskip
If $p(x) \in \mathcal{P}_n$, then $p(x)$ can be expressed as a linear combination of the elements in $\left\{G^k_l\right\}^n_{l=0}$. That is,
\begin{equation}\label{eqn9}
p(x)=\sum_{l=k}^{n+k}a_lG_{l}^k(x).
\end{equation}
Let us introduce the polynomial
\begin{equation}\label{eqn10}
p(x)=\sum_{l=k}^{n+k}G_{l}^k(x)x^{n+k-l}.
\end{equation}
with $n \in \mathbb{N}$, then taking the first derivative of (\ref{eqn10})
\begin{align*}
p^{\prime}(x)&=\sum_{l=k}^{n+k}\{G_{l}^k(x)(n+k-l)x^{n+k-l-1}+lG_{l-1}^k(x)x^{n+k-l}\}\\
&=\sum_{l=k}^{n+k-1}(n+k-l)G_{l}^k(x)x^{n+k-l-1}+\sum_{l=k}^{n+k}lG_{l-1}^k(x)x^{n-l}\\
&=\sum_{l=k+1}^{n+k}(n+k-(l-1))G_{l-1}^k(x)x^{n+k-(l-1)-1}+\sum_{l=k+1}^{n+k}lG_{l-1}^k(x)x^{n+k-l}\\
&\;\;\;\;+kG_{k-1}^k(x)x^{n-k}.
\end{align*}
Since $G_{k-1}^k(x)=0$, 
$$p^{\prime}(x)=(n+k+1)\sum_{l=k+1}^{n+k}G_{l-1}^k(x)x^{n+k-l}.$$
For the second derivative, one can easily verify that
$$p^{\prime\prime}(x)=(n+k+1)(n+k)\sum_{l=k+2}^{n+k}G_{l-2}^k(x)x^{n+k-l}.$$
Continuing this process yields
$$p^{(j)}(x)=(n+k+1)(n+k)\ldots (n+k+2-j)\sum_{l=k+j}^{n+k}G_{l-j}^k(x)x^{n+k-l}.$$
The following lemma states formally this result.
\begin{lemm}\label{lem2}
The $j$th derivative of the polynomial $p(x)$ in \eqref{eqn10} is given by
$$p^{(j)}(x)=\frac{(n+k+1)!}{(n+k+1-j)!}\sum_{l=k+j}^{n+k}G_{l-j}^k(x)x^{n+k-l}.$$
\end{lemm}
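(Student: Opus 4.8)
The plan is to prove the formula by induction on $j$, taking the derivative identity \eqref{eqn3} as the engine and the vanishing relation $G_{k-1}^k(x)=0$ (which holds because the generating function in \eqref{eqn1} carries a factor $t^k$) as the tool that controls the boundary terms. The base case $j=0$ is immediate: the prefactor $(n+k+1)!/(n+k+1)!$ equals $1$ and the sum collapses to the definition \eqref{eqn10} of $p(x)$. Equivalently, the explicit computation of $p'(x)$ already displayed in the text serves as the case $j=1$, so the induction has a verified starting point.

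For the inductive step, I would assume the stated formula for some $j$ and differentiate both sides. Applying the product rule together with \eqref{eqn3} to each summand $G_{l-j}^k(x)x^{n+k-l}$ splits the derivative into two contributions: one in which the Genocchi index drops by one, via $\frac{d}{dx}G_{l-j}^k(x)=(l-j)G_{l-j-1}^k(x)$, and one in which the power of $x$ drops by one. Writing $C_j=(n+k+1)!/(n+k+1-j)!$, this gives
\begin{align*}
p^{(j+1)}(x)=C_j\sum_{l=k+j}^{n+k}\Big[(l-j)G_{l-j-1}^k(x)x^{n+k-l}+(n+k-l)G_{l-j}^k(x)x^{n+k-l-1}\Big].
\end{align*}

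The crux of the argument is then a careful reconciliation of the two index ranges, mirroring exactly the manipulations shown for $p'(x)$ and $p''(x)$. In the first sum the bottom term $l=k+j$ carries the factor $G_{l-j-1}^k(x)=G_{k-1}^k(x)=0$, so its lower limit may be raised to $l=k+j+1$. In the second sum the top term $l=n+k$ carries the factor $(n+k-l)=0$ and drops out; after the shift $l\mapsto l-1$ this sum likewise runs from $l=k+j+1$ to $n+k$ with summand $(n+k+1-l)G_{l-j-1}^k(x)x^{n+k-l}$. With the two ranges now aligned, merging them makes the bracketed coefficient collapse to $(l-j)+(n+k+1-l)=n+k+1-j$, a constant independent of $l$. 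Factoring it out converts $C_j$ into $C_{j+1}=(n+k+1)!/(n+k-j)!$ and yields precisely the claimed expression with $j$ replaced by $j+1$, completing the induction.

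I expect the only real difficulty to be bookkeeping: keeping the two summation limits, the index shift in the second sum, and the two distinct vanishing boundary terms (one from $G_{k-1}^k(x)=0$, the other from the coefficient $n+k-l$) perfectly synchronized so that the surviving sums share identical limits and can be added termwise. Once the ranges coincide, the telescoping of the coefficient to the constant $n+k+1-j$ and the absorption of that constant into the factorial prefactor are routine.
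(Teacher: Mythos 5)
Your proof is correct and follows essentially the same route as the paper: the paper computes $p'(x)$ via the product rule, the relation $\frac{d}{dx}G_n^k(x)=nG_{n-1}^k(x)$, the vanishing boundary term $G_{k-1}^k(x)=0$, and the coefficient collapse $(n+k-l+1)+l=n+k+1$, then obtains the general formula by ``continuing this process.'' Your explicit induction on $j$ is simply the rigorous formalization of that iteration, with identical bookkeeping (the shifted index, the two vanishing boundary terms, and the collapse $(l-j)+(n+k+1-l)=n+k+1-j$ absorbed into the factorial prefactor).
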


\section{Main Results}
In the complex plane, wequation (\ref{eqn1}) can be written into the following:
\begin{align*}
\sum_{n=k}^{\infty}G_n^{k}(z)\frac{t^n}{n!}&=\left(\frac{2}{e^t+1}\right)^ke^{zt}t^k=\sum_{n=0}^{\infty}E_n^{k}(z)\frac{t^{n+k}}{n!}\\
&=\sum_{n=k}^{\infty}E_{n-k}^{k}(z)\frac{t^{n}}{(n-k)!}\\
&=\sum_{n=k}^{\infty}\frac{n!E_{n-k}^{k}(z)}{(n-k)!}\frac{t^{n}}{n!}.
\end{align*}
Comparing the coefficients of $\frac{t^{n}}{n!}$ yields
\begin{equation}\label{eqn00}
G_n^{k}(z)=\frac{n!}{(n-k)!}E_{n-k}^{k}(z).
\end{equation}
Consequently, this gives
\begin{align*}
G_{n+k}^{k}(z)&=\frac{(n+k)!}{((n+k)-k)!}E_{(n+k)-k}^{k}(z)\\
&=\frac{(n+k)!}{n!}E_{n}^{k}(z)\\
&=(n + k)(n + k - 1) · · · (n + 1)E_{n}^{k}(z)\\
&=(n + k)_kE_{n}^{k}(z),
\end{align*}
where $(n + k)_k = (n + k)(n + k -1)\ldots (n + 1)$ is called the falling factorial of $n+k$ of degree $k$. Note that, when $z=0$, 
\begin{equation}\label{eqn011}
\frac{G_{n+k}^{k}(z)}{(n+k)_k}=E_{n}^k,
\end{equation}
and when $z=k-1$, 
\begin{equation}\label{eqn012}
\frac{G_{n+k}^{k}(k-1)}{(n+k)_k}=E_{n}^k(k-1).
\end{equation}
\begin{thrm}\label{thm1}
The following identity holds
\begin{align*}
\sum_{l=k}^{n+k}G_{l}^k(x)x^{n+k-l}=\sum_{j=0}^n\frac{\binom{n+k+1}{j}}{n+k-j+2}\left(\sum_{l=k+j-1}^{n+k}(-1)^{k+l-j+1}G_{l-j+1}^k(k-1) - G_{n+k-j+1}^k\right)B_j^k(x),
\end{align*}
where $B_j^k(x)$ denotes Bernoulli polynomial of order $k$.
\end{thrm}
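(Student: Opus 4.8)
The left-hand side is precisely the polynomial $p(x)$ of \eqref{eqn10}. First I would confirm that $p(x)\in\mathcal{P}_n$: by \eqref{eqn00} we have $G_l^k(x)=\tfrac{l!}{(l-k)!}E_{l-k}^k(x)$, a polynomial of degree $l-k$, so each summand $G_l^k(x)x^{n+k-l}$ has degree $(l-k)+(n+k-l)=n$, whence $\deg p\le n$. Since $\{B_0^k(x),\dots,B_n^k(x)\}$ is a basis of $\mathcal{P}_n$, I can write $p(x)=\sum_{j=0}^n b_j B_j^k(x)$, and the entire task reduces to identifying each $b_j$ with the bracketed expression in the statement.

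The mechanism is the coefficient-extraction identity underlying the Kim--Kim basis method \cite{ref9}: using the Appell relation $\tfrac{d}{dx}B_m^k(x)=mB_{m-1}^k(x)$ together with the normalization of $\int_0^1 B_m^k$, one gets $b_j=\tfrac{1}{j!}\int_0^1 p^{(j)}(x)\,dx$. The decisive simplification is that I never actually integrate the products appearing in $p^{(j)}$: by the fundamental theorem of calculus the integral is a boundary difference,
\[
b_j=\frac{1}{j!}\bigl(p^{(j-1)}(1)-p^{(j-1)}(0)\bigr)\qquad(j\ge 1),
\]
with $b_0=\int_0^1 p(x)\,dx$ handled by the same antiderivative pattern.

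Next I would feed in Lemma~\ref{lem2} with $j$ replaced by $j-1$, namely $p^{(j-1)}(x)=\tfrac{(n+k+1)!}{(n+k-j+2)!}\sum_{l=k+j-1}^{n+k}G_{l-j+1}^k(x)\,x^{n+k-l}$, and evaluate at the two endpoints. At $x=1$ every power $x^{n+k-l}$ equals $1$, producing $\sum_{l=k+j-1}^{n+k}G_{l-j+1}^k(1)$; at $x=0$ the factor $0^{n+k-l}$ annihilates every term except $l=n+k$, collapsing the sum to the single Genocchi number $G_{n+k-j+1}^k$. The prefactor $\tfrac{1}{j!}\cdot\tfrac{(n+k+1)!}{(n+k-j+2)!}$ simplifies to $\tfrac{\binom{n+k+1}{j}}{n+k-j+2}$. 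Finally I would rewrite each value $G_{l-j+1}^k(1)$ via the symmetry relation \eqref{eqn7}, $G_m^k(1)=(-1)^{m+k}G_m^k(k-1)$ with $m=l-j+1$, which installs the signs $(-1)^{k+l-j+1}$ and yields exactly the claimed formula.

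The routine parts are the FTC step, the endpoint bookkeeping, and the invocation of \eqref{eqn7}. The one genuinely non-trivial ingredient is justifying the coefficient-extraction identity $b_j=\tfrac{1}{j!}\int_0^1 p^{(j)}$ for the order-$k$ Bernoulli basis; once that is in hand everything downstream is mechanical. I expect the careful index matching between the derivative level $j-1$ and the summation range $k+j-1\le l\le n+k$ to be where off-by-one and sign slips are most likely, so I would verify the boundary evaluations and the $k=1$ specialization as a consistency check.
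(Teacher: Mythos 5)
Your proposal follows the same route as the paper's proof: identify the left-hand side with $p(x)$ from \eqref{eqn10}, expand in the basis $\{B_0^k(x),\dots,B_n^k(x)\}$, reduce everything to the coefficient formula $b_j=\frac{1}{j!}\bigl(p^{(j-1)}(1)-p^{(j-1)}(0)\bigr)$ (this is exactly the paper's \eqref{eqnn1}), and then apply Lemma \ref{lem2}, the endpoint evaluations, and the symmetry \eqref{eqn7}; all of that downstream bookkeeping matches the paper's computation, and your degree check via \eqref{eqn00} is a nice addition the paper omits. The genuine gap is the one step you explicitly deferred: the coefficient-extraction identity $b_j=\frac{1}{j!}\int_0^1 p^{(j)}(x)\,dx$. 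You base it on ``the normalization of $\int_0^1 B_m^k$,'' i.e.\ on $\int_0^1 B_m^k(x)\,dx=\delta_{m,0}$, but that normalization holds only for $k=1$. For higher order, the two relations stated in the introduction give
\[
\int_0^1 B_m^k(x)\,dx=\frac{B_{m+1}^k(1)-B_{m+1}^k(0)}{m+1}=B_m^{k-1},
\]
which is nonzero in general (for instance $\int_0^1 B_1^2(x)\,dx=B_1^1=-\tfrac12$). Consequently, if $p(x)=\sum_{m=0}^n b_mB_m^k(x)$, then
\[
\frac{1}{j!}\int_0^1 p^{(j)}(x)\,dx=b_j+\frac{1}{j!}\sum_{m=j+1}^n b_m\frac{m!}{(m-j)!}\,B_{m-j}^{k-1},
\]
and the cross terms do not vanish when $k\ge 2$. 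So the identity you need is not merely unproved in your sketch; as stated it is false for $k\ge 2$, and the gap cannot be filled along the line you propose.

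For what it is worth, the paper's own derivation of \eqref{eqnn1} suffers from the same defect: it computes $p^{(n-1)}(x)=a_{n-1}(n-1)!B_0^k(x)+a_nn!B_1^k(x)$ and then ``replaces $n$ with $j$,'' silently discarding the terms $a_m\frac{m!}{(m-j+1)!}B_{m-j+1}^k(x)$ with $m>j$; their boundary increments $B_i^k(1)-B_i^k(0)=iB_{i-1}^{k-1}$ vanish for all $i\ne 1$ only when $k=1$ (since $B_{i-1}^0(0)=0^{i-1}$), not for higher order. So both your argument and the paper's establish the stated expansion only in the case $k=1$, where it recovers the known result for ordinary Genocchi polynomials. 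A correct proof for $k\ge 2$ would require extracting the coefficients with a functional genuinely dual to the higher-order Bernoulli basis, as in the treatment of $B_j^k$ in \cite{ref10}, and the statement of the theorem itself would then need to be re-examined rather than taken as the target.
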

\begin{proof}
On account of the properties of Bernoulli basis for the space polynomials of degree less than or equal to $n$ with coefficients in $\mathbb{Q}$, then $p(x)$ in \eqref{eqn10} can be written as follows: (see \cite{ref10})
\begin{equation}\label{eqn11}
p(x)=\sum_{j=0}^na_jB_j^k(x).
\end{equation}
Taking the first and second derivative
\begin{align*}
p^{\prime}(x)&=\sum_{j=1}^na_jjB_{j-1}^k(x)=\sum_{j=1}^na_j\frac{j!}{(j-1)!}B_{j-1}^k(x),\\
p^{\prime\prime}(x)&=\sum_{j=2}^na_jj(j-1)B_{j-2}^k(x)=\sum_{j=2}^na_j\frac{j!}{(j-2)!}B_{j-2}^k(x).
\end{align*}
Continuing in this manner yields
\begin{align*}
p^{(n-1)}(x)&=\sum_{j=n-1}^na_j\frac{j!}{(j-n+1)!}B_{j-n+1}^k(x)\\
&=a_{n-1}\frac{(n-1)!}{((n-1)-n+1)!}B_{(n-1)-n+1}^k(x)+a_{n}\frac{n!}{(n-n+1)!}B_{n-n+1}^k(x)\\
&=a_{n-1}(n-1)!B_{0}^k(x)+a_{n}n!B_{1}^k(x).
\end{align*}
Replacing $n$ with $j$ gives
$$p^{(j-1)}(x)=a_{j-1}(j-1)!B_{0}^k(x)+a_{j}j!B_{1}^k(x).$$
Hence, with $B_0^k(x)=1$ and $B_1^k (x) = x-\frac{k}{2}$, 
\begin{align*}
p^{(j-1)}(1)-p^{(j-1)}(0)&=a_{j-1}(j-1)!B_{0}^k(1)+a_{j}j!B_{1}^k(1)-[a_{j-1}(j-1)!B_{0}^k(0)+a_{j}j!B_{1}^k(0)]\\
&=a_jj!\left(1-\frac{k}{2}\right)-a_jj!\left(0-\frac{k}{2}\right)=a_jj!.
\end{align*}
Thus, 
\begin{equation}\label{eqnn1}
a_j=\frac{1}{j!}[p^{(j-1)}(1)-p^{(j-1)}(0)].
\end{equation}
Using Lemma \ref{lem2} and \eqref{eqn7}, 
\begin{align*}
a_j&=\frac{1}{j!}\left(\frac{(n+k+1)!}{(n+k+1-j+1)!}\sum_{l=k+j-1}^{n+k}G_{l-j+1}^k(1)1^{n+k-l}\right.\\
&\;\;\;\;\;\;\;\;\;\left.-\frac{(n+k+1)!}{(n+k+1-j+1)!}\sum_{l=k+j-1}^{n+k}G_{l-j+1}^k(0)0^{n+k-l}\right)\\
&=\frac{1}{j!}\frac{(n+k+1)!}{(n+k-j+2)!}\left(\sum_{l=k+j-1}^{n+k}G_{l-j+1}^k(1) - G_{n+k-j+1}^k(0)\right)\\
&=\frac{\binom{n+k+1}{j}}{n+k-j+2}\left(\sum_{l=k+j-1}^{n+k}(-1)^{k+l-j+1}G_{l-j+1}^k(k-1) - G_{n+k-j+1}^k\right).\\
\end{align*}
Thus, \eqref{eqn11} gives
$$p(x)=\sum_{j=0}^n\frac{\binom{n+k+1}{j}}{n+k-j+2}\left(\sum_{l=k+j-1}^{n+k}(-1)^{k+l-j+1}G_{l-j+1}^k(k-1) - G_{n+k-j+1}^k\right)B_j^k(x),$$
which is exactly the desired identity. 
\end{proof}

\begin{corr}
The following equality holds:
\begin{align*}
\sum_{l=k}^{n+k}G_{l}^k(x)x^{n+k-l}&=\sum_{j=0}^n\frac{\binom{n+k+1}{j}}{n+k-j+2}\Bigl(\sum_{l=k+j-1}^{n+k}\left[(-1)^{k+l-j+1}(l-j+1)_kE_{l-j+1-k}^k(k-1) \right.\Bigr.\\
&\;\;\;\;\;\;\;\Bigl.\left. -(n+k-j+1)_kE_{n-j+1}^k\right]\Bigr)B_j^k(x),
\end{align*}
\end{corr}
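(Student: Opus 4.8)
The plan is to treat this as an immediate corollary of Theorem \ref{thm1}, obtained by rewriting every Genocchi quantity on the right-hand side in terms of Euler polynomials of the same order. The only ingredient required is the conversion formula \eqref{eqn00}, equivalently the relation $G_{m}^{k}(z)=(m)_k\,E_{m-k}^{k}(z)$ established just before the theorem, where $(m)_k=m(m-1)\cdots(m-k+1)$ denotes the falling factorial. Since Theorem \ref{thm1} already supplies the full expansion in the Bernoulli basis $B_j^k(x)$, no new representation-theoretic work is needed; the statement is a term-by-term translation from the Genocchi language into the Euler language.

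Concretely, I would apply the conversion to the two Genocchi symbols sitting inside the $j$-sum of Theorem \ref{thm1}. For the evaluated term I set $m=l-j+1$ and obtain
$$G_{l-j+1}^k(k-1)=(l-j+1)_k\,E_{l-j+1-k}^k(k-1),$$
and for the constant term, writing $G_{n+k-j+1}^k=G_{n+k-j+1}^k(0)$ and setting $m=n+k-j+1$, I obtain
$$G_{n+k-j+1}^k=(n+k-j+1)_k\,E_{(n+k-j+1)-k}^k(0)=(n+k-j+1)_k\,E_{n-j+1}^k.$$
Substituting these two expressions into the identity of Theorem \ref{thm1}, while carrying the sign factor $(-1)^{k+l-j+1}$ and the prefactor $\binom{n+k+1}{j}/(n+k-j+2)$ through unchanged, produces exactly the displayed equality. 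The conversion formula preserves the evaluation point and merely rescales by the falling factorial, so nothing beyond clean bookkeeping is involved.

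The one routine verification is that every Euler polynomial produced is well defined, i.e. that the lower indices stay nonnegative over the stated summation range. As $l$ runs from $k+j-1$ to $n+k$, the index $l-j+1-k$ runs from $0$ up to $n-j+1$, and $n-j+1\ge 0$ for all $0\le j\le n$, so no ill-defined symbol appears. The only point demanding care in matching the displayed form is the placement of the second term: in Theorem \ref{thm1} the quantity $G_{n+k-j+1}^k$ is subtracted once per index $j$ (outside the inner $l$-sum), so after conversion $(n+k-j+1)_k E_{n-j+1}^k$ should likewise be understood as subtracted once for each $j$, to keep the two sides in exact correspondence. There is no genuine obstacle here; all mathematical content resides in Theorem \ref{thm1}, and the corollary is a one-line application of \eqref{eqn00}.
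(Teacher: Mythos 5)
Your proposal matches the paper's proof exactly: the paper obtains this corollary in one line by substituting the conversion formulas \eqref{eqn011} and \eqref{eqn012} (equivalently \eqref{eqn00}, as you use it) into the identity of Theorem \ref{thm1}. Your closing remark about the placement of the term $(n+k-j+1)_k E_{n-j+1}^k$ --- that it must be read as subtracted once per $j$, outside the inner $l$-sum --- is also well taken, since the corollary's displayed brackets literally place it inside the $l$-sum, which is a typographical slip rather than the intended meaning.
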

\begin{proof}
Applying equations \eqref{eqn011} and \eqref{eqn012} immediately proves the corollary.
\end{proof}

\smallskip
The identity in the next theorem is obtained by using the fact that the set of Euler polynomials of higher order forms a basis for the space of polynomials $\mathcal{P}_n$.
\begin{thrm}\label{thm2}
The following identity holds
\begin{align*}
\sum_{l=k}^{n+k}G_{l}^k(x)x^{n+k-l}=\sum_{j=0}^n\frac{1}{2}\binom{n+k+1}{j}\left(\sum_{l=k+j}^{n+k}(-1)^{k+l-j}G_{l-j}^k(k-1) + G_{n+k-j}^k\right)E_j^k(x),
\end{align*}
where $E_j^k(x)$ denotes Euler polynomial of order $k$.
\end{thrm}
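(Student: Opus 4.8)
The plan is to mirror exactly the structure of the proof of Theorem \ref{thm1}, replacing the Bernoulli basis by the Euler basis $\{E_0^k(x),\dots,E_n^k(x)\}$ and using the Euler analogue of the difference relation. Since $p(x)\in\mathcal{P}_n$, I would first write
\begin{equation*}
p(x)=\sum_{j=0}^n b_j E_j^k(x),
\end{equation*}
and the entire task reduces to extracting a clean closed form for the coefficients $b_j$. The key structural difference from the Bernoulli case is that Euler polynomials satisfy an \emph{addition} identity rather than a \emph{subtraction} one, namely $E_n^k(x+1)+E_n^k(x)=2E_{n-1}^{k-1}(x)$, which foreshadows the ``$+\,G_{n+k-j}^k$'' (rather than ``$-$'') and the factor $\tfrac12$ appearing in the target formula.

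The core computation runs as follows. Using $\tfrac{d}{dx}E_n^k(x)=nE_{n-1}^k(x)$ repeatedly, I would differentiate $p(x)=\sum_j b_j E_j^k(x)$ term by term exactly as in Theorem \ref{thm1}, obtaining
\begin{equation*}
p^{(j)}(x)=\sum_{i=j}^n b_i\,\frac{i!}{(i-j)!}\,E_{i-j}^k(x),
\end{equation*}
and then specialize to the $(j-1)$st derivative so that only two terms survive. Here the crucial evaluation is at $x=0$ and $x=1$: whereas in the Bernoulli case the constant-term contributions canceled in the difference $p^{(j-1)}(1)-p^{(j-1)}(0)$, for Euler polynomials I would instead form the \emph{sum} $p^{(j-1)}(1)+p^{(j-1)}(0)$. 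With $E_0^k(x)=1$ and $E_1^k(x)=x-\tfrac{k}{2}$, the terms proportional to $b_{j-1}$ no longer cancel automatically, so I would solve the resulting pair of relations (or more simply isolate $b_j$ from the $E_1^k$ coefficient) to obtain
\begin{equation*}
b_j=\frac{1}{2\cdot j!}\bigl[p^{(j-1)}(1)+p^{(j-1)}(0)\bigr].
\end{equation*}

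With this expression for $b_j$ in hand, the remaining steps are purely mechanical and parallel the Bernoulli proof. I would invoke Lemma \ref{lem2} to evaluate $p^{(j-1)}(1)$ and $p^{(j-1)}(0)$, noting that $0^{n+k-l}$ kills all but the top term in the sum at $x=0$ (leaving a single $G_{n+k-j}^k$), while at $x=1$ I would apply the reflection identity \eqref{eqn7}, $G_{l-j}^k(1)=(-1)^{l-j+k}G_{l-j}^k(k-1)$, to convert each $G_{l-j}^k(1)$ into the signed $G_{l-j}^k(k-1)$ of the statement. Substituting $\frac{(n+k+1)!}{(n+k+1-(j-1))!}=\frac{(n+k+1)!}{(n+k-j+2)!}$ and combining with the $\tfrac{1}{2\,j!}$ prefactor should collapse to $\tfrac12\binom{n+k+1}{j}$, yielding precisely the claimed identity.

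The main obstacle I anticipate is the sign and index bookkeeping at the two boundary evaluations, specifically making sure the summation range shifts from $l=k+j-1$ (as in Theorem \ref{thm1}, where the derivative order was $j-1$ against a Bernoulli index offset) to the stated range $l=k+j$, and confirming that the reflection exponent $(-1)^{k+l-j}$ comes out with the correct parity after the index relabeling. I would verify this by checking a small case, say $j=0$ or $j=1$ with $k=1$, against the known single-order Genocchi–Euler relation, to be confident the $\tfrac12$ and the $+$ sign are placed correctly before writing out the general term.
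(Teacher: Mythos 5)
Your overall strategy --- expand $p(x)=\sum_{j=0}^n b_jE_j^k(x)$, extract $b_j$ by evaluating a derivative of $p$ at $x=0$ and $x=1$ and forming the \emph{sum} rather than the difference, then apply Lemma \ref{lem2} and the reflection identity \eqref{eqn7} --- is exactly the paper's strategy, but your coefficient extraction contains a genuine off-by-one error. In the Bernoulli case the $(j-1)$-st derivative is the right one because the difference $B_m^k(1)-B_m^k(0)$ singles out the $B_1^k$ term, whose coefficient in $p^{(j-1)}$ is $a_j\,j!$. In the Euler case the sum $E_m^k(1)+E_m^k(0)$ singles out the \emph{constant} term $E_0^k$ (for $k=1$ one has $E_m(1)+E_m(0)=2\cdot 0^m$, whereas $E_1^k(1)+E_1^k(0)=1-k$, which vanishes precisely when $k=1$), and the coefficient of $E_0^k$ in $p^{(j-1)}$ is $b_{j-1}(j-1)!$, not $b_jj!$. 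So your formula $b_j=\frac{1}{2j!}\bigl[p^{(j-1)}(1)+p^{(j-1)}(0)\bigr]$ is false: for $k=1$ its right-hand side equals $b_{j-1}/j$. The paper instead takes the $j$-th derivative, so that $p^{(j)}(1)+p^{(j)}(0)=2b_jj!$, i.e.\ $b_j=\frac{1}{2j!}\bigl[p^{(j)}(1)+p^{(j)}(0)\bigr]$, which is its equation \eqref{eqnn2}.

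This is not mere bookkeeping; it propagates into exactly the two discrepancies you flag at the end but cannot resolve within your setup. With derivative order $j-1$, Lemma \ref{lem2} produces the prefactor $\frac{(n+k+1)!}{(n+k+2-j)!}$, and $\frac{1}{2j!}\cdot\frac{(n+k+1)!}{(n+k+2-j)!}=\frac{1}{2}\cdot\frac{1}{n+k+2-j}\binom{n+k+1}{j}$, not the clean $\frac{1}{2}\binom{n+k+1}{j}$ of the statement (the extra denominator $n+k-j+2$ is precisely the artifact of the Bernoulli-type extraction in Theorem \ref{thm1}); likewise your sum would start at $l=k+j-1$ instead of the stated $l=k+j$. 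Once you replace $j-1$ by $j$ throughout, everything you propose goes through verbatim: Lemma \ref{lem2} with order $j$ gives $\frac{(n+k+1)!}{(n+k+1-j)!}\sum_{l=k+j}^{n+k}G_{l-j}^k(x)x^{n+k-l}$, the evaluation at $x=0$ keeps only $G_{n+k-j}^k$ via the factor $0^{n+k-l}$, the reflection \eqref{eqn7} supplies $(-1)^{k+l-j}G_{l-j}^k(k-1)$, and $\frac{1}{2j!}\cdot\frac{(n+k+1)!}{(n+k+1-j)!}=\frac{1}{2}\binom{n+k+1}{j}$ as required.
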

\begin{proof}
Expressing the polynomial $p(x)$ as a linear combination of Euler polynomials of higher order yields
\begin{equation}\label{eqn12}
p(x)=\sum_{j=0}^nb_jE_j^k(x).
\end{equation}
Taking the first and second derivative
\begin{align*}
p^{\prime}(x)&=\sum_{j=1}^nb_jjE_{j-1}^k(x)=\sum_{j=1}^nb_j\frac{j!}{(j-1)!}E_{j-1}^k(x)\\
p^{\prime\prime}(x)&=\sum_{j=2}^nb_jj(j-1)E_{j-2}^k(x)=\sum_{j=2}^nb_j\frac{j!}{(j-2)!}E_{j-2}^k(x).
\end{align*}
Continuing in this manner yields
\begin{align*}
p^{(n-1)}(x)&=\sum_{j=n-1}^nb_j\frac{j!}{(j-n+1)!}E_{j-n+1}^k(x)\\
p^{(n)}(x)&=\sum_{j=n}^nb_j\frac{j!}{(j-n)!}E_{j-n}^k(x)\\
&=b_nn!E_{0}^k(x).
\end{align*}
Replacing $n$ with $j$ gives
$$p^{(j)}(x)=b_{j}j!E_{0}^k(x).$$
Hence, with $E_0^k(x)=1$, 
\begin{align*}
p^{(j)}(1)+p^{(j)}(0)&=b_{j}j![E_{0}^k(1)+E_{0}^k(0)]=b_jj![1+1]=2b_jj!.
\end{align*}
Thus, 
\begin{equation}\label{eqnn2}
b_j=\frac{1}{2j!}[p^{(j)}(1)+p^{(j)}(0)].
\end{equation}
Using Lemma \ref{lem2} and \eqref{eqn7}, 
\begin{align*}
b_j&=\frac{1}{2j!}\left(\frac{(n+k+1)!}{(n+k+1-j)!}\sum_{l=k+j}^{n+k}G_{l-j}^k(1)1^{n+k-l}\right.\\
&\;\;\;\;\;\;\;\;\;\left.+\frac{(n+k+1)!}{(n+k+1-j)!}\sum_{l=k+j}^{n+k}G_{l-j}^k(0)0^{n+k-l}\right)\\
&=\frac{1}{2j!}\frac{(n+k+1)!}{(n+k-j+1)!}\left(\sum_{l=k+j}^{n+k}G_{l-j}^k(1) + G_{n+k-j}^k(0)\right)\\
&=\frac{1}{2}\binom{n+k+1}{j}\left(\sum_{l=k+j}^{n+k}(-1)^{k+l-j}G_{l-j}^k(k-1) + G_{n+k-j}^k\right).\\
\end{align*}
Thus, \eqref{eqn11} gives
$$p(x)=\sum_{j=0}^n\frac{1}{2}\binom{n+k+1}{j}\left(\sum_{l=k+j}^{n+k}(-1)^{k+l-j}G_{l-j}^k(k-1) + G_{n+k-j}^k\right)E_j^k(x),$$
which is exactly the desired identity. 
\end{proof}

\smallskip
The following corollary immediately follows from Theorem \ref{thm2} using equations \eqref{eqn011} and \eqref{eqn012}.
\begin{corr}
The following equality holds:
\begin{align*}
\sum_{l=k}^{n+k}G_{l}^k(x)x^{n+k-l}&=\sum_{j=0}^n\frac{1}{2}\binom{n+k+1}{j}\Bigl(\sum_{l=k+j-1}^{n+k}\left[(-1)^{k+l-j+1}(l-j)_kE_{l-j-k}^k(k-1) \right.\Bigr.\\
&\;\;\;\;\;\;\;\Bigl.\left. +(n+k-j)_kE_{n-j}^k\right]\Bigr)E_j^k(x).
\end{align*}
\end{corr}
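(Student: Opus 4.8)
The plan is to obtain this identity as an immediate corollary of Theorem \ref{thm2}, by converting every higher-order Genocchi quantity appearing in the bracketed coefficient of $E_j^k(x)$ into the corresponding higher-order Euler quantity. No new machinery is needed: the entire content is the substitution dictated by equation \eqref{eqn00}, together with its specializations \eqref{eqn011} and \eqref{eqn012}.

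The key relation I would use is the one already established in \eqref{eqn00}, namely $G_m^k(z)=\frac{m!}{(m-k)!}E_{m-k}^k(z)$, which in falling-factorial form reads $G_{r+k}^k(z)=(r+k)_k\,E_r^k(z)$ for every integer $r\ge 0$ and every $z$. The cases $z=0$ and $z=k-1$ are precisely \eqref{eqn011} and \eqref{eqn012}. I would apply this relation twice inside the coefficient furnished by Theorem \ref{thm2}, once to the evaluated Genocchi polynomial and once to the Genocchi number.

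First, for the evaluated term I set $r=l-j-k$, so that $G_{l-j}^k(k-1)=(l-j)_k\,E_{l-j-k}^k(k-1)$; observe that as $l$ runs over the summation range the index $r=l-j-k$ is nonnegative exactly when $l\ge k+j$, which fixes the lower limit of the inner sum and keeps each Euler polynomial well defined. Second, for the Genocchi-number term I write $n+k-j=(n-j)+k$ and set $r=n-j$, giving $G_{n+k-j}^k=G_{n+k-j}^k(0)=(n+k-j)_k\,E_{n-j}^k$. Substituting both expressions into the bracket of Theorem \ref{thm2}, while carrying along the sign $(-1)^{k+l-j}$ and the prefactor $\tfrac12\binom{n+k+1}{j}$, and then replacing the left-hand side by its value from Theorem \ref{thm2}, yields the asserted right-hand side.

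Since the statement is a purely formal consequence of Theorem \ref{thm2} and \eqref{eqn00}, there is no analytic difficulty whatsoever. The only genuine obstacle is index bookkeeping: one must track the falling-factorial arguments $(l-j)_k$ and $(n+k-j)_k$, keep the Euler subscripts $l-j-k$ and $n-j$ aligned with the summation bounds (so that the inner sum runs from $l=k+j$, ensuring $l-j-k\ge 0$), and record the parity factor consistently so that the resulting coefficient of $E_j^k(x)$ matches the claimed expression.
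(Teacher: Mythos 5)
Your proposal is correct and is precisely the paper's own argument: the corollary is obtained by substituting $G_{l-j}^{k}(k-1)=(l-j)_{k}E_{l-j-k}^{k}(k-1)$ and $G_{n+k-j}^{k}=(n+k-j)_{k}E_{n-j}^{k}$, i.e.\ equations \eqref{eqn011} and \eqref{eqn012} (both specializations of \eqref{eqn00}), into the identity of Theorem \ref{thm2}. One remark: the form your substitution actually produces --- inner sum starting at $l=k+j$, sign $(-1)^{k+l-j}$, and the term $(n+k-j)_{k}E_{n-j}^{k}$ added once outside the inner sum --- is the correct one; the printed corollary's lower limit $k+j-1$, exponent $k+l-j+1$, and placement of that term inside the bracketed sum are misprints carried over from the corollary to Theorem \ref{thm1}.
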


\smallskip
Consider the following form of polynomial
\begin{equation}\label{eqn13}
p(x)=\sum_{l=k}^{n+k}\frac{1}{l!(n+k-l)!}G_{l}^k(x)x^{n+k-l}.
\end{equation}
Then, applying the first derivative gives
\begin{align*}
p^{\prime}(x)&=\sum_{l=k}^{n+k}\frac{1}{l!(n+k-l)!}\left[G_{l}^k(x)(n+k-l)x^{n+k-l-1}+lG_{l-1}^k(x)x^{n+k-l}\right]\\
&=\sum_{l=k}^{n+k-1}\frac{1}{l!(n+k-l)!}G_{l}^k(x)(n+k-l)x^{n+k-l-1}\\
&\;\;\;\;\;\;\;+\sum_{l=k}^{n+k}\frac{1}{l!(n+k-l)!}lG_{l-1}^k(x)x^{n+k-l}\\
&=\sum_{l=k+1}^{n+k}\frac{1}{(l-1)!(n+k-l+1)!}G_{l-1}^k(x)(n+k-l+1)x^{n+k-l}\\
&\;\;\;\;\;\;\;+\sum_{l=k+1}^{n+k}\frac{1}{l!(n+k-l)!}lG_{l-1}^k(x)x^{n+k-l}\\
&=\sum_{l=k+1}^{n+k}\frac{1}{(l-1)!(n+k-l)!}G_{l-1}^k(x)x^{n+k-l}\\
&\;\;\;\;\;\;\;+\sum_{l=k+1}^{n+k}\frac{1}{(l-1)!(n+k-l)!}G_{l-1}^k(x)x^{n+k-l}\\
&=2\sum_{l=k+1}^{n+k}\frac{1}{(l-1)!(n+k-l)!}G_{l-1}^k(x)x^{n+k-l}.
\end{align*}
Applying the same process gives the following second and third derivative of the polynomial
\begin{align*}
p^{\prime\prime}(x)&=2^2\sum_{l=k+2}^{n+k}\frac{1}{(l-2)!(n+k-l)!}G_{l-2}^k(x)x^{n+k-l},\\
p^{\prime\prime\prime}(x)&=2^3\sum_{l=k+3}^{n+k}\frac{1}{(l-3)!(n+k-l)!}G_{l-3}^k(x)x^{n+k-l}.
\end{align*}
Thus, by induction, the following lemma is proved.
\begin{lemm}\label{lem111}
The $j$th derivative of polynomial $p(x)$ in \eqref{eqn13} is given by
\begin{equation}\label{eqn14}
p^{(j)}(x)=2^j\sum_{l=k+j}^{n+k}\frac{1}{(l-j)!(n+k-l)!}G_{l-j}^k(x)x^{n+k-l}.
\end{equation}
\end{lemm}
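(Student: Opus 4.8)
The plan is to prove \eqref{eqn14} by induction on $j$, since the explicit computations of $p'(x)$, $p''(x)$, and $p'''(x)$ preceding the statement already expose the entire mechanism. Those three computations, together with the trivial case $j=0$, serve as base cases, so it suffices to carry out the inductive step: assuming \eqref{eqn14} holds for some $j$, I would differentiate both sides once more and recover the same formula with $j$ replaced by $j+1$.

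First I would differentiate the summand using the product rule together with the derivative identity \eqref{eqn3}, namely
\begin{align*}
\frac{d}{dx}\left[G_{l-j}^k(x)x^{n+k-l}\right] = (l-j)G_{l-j-1}^k(x)x^{n+k-l} + (n+k-l)G_{l-j}^k(x)x^{n+k-l-1}.
\end{align*}
This splits $p^{(j+1)}(x)$ into two sums. In the first, the factor $(l-j)/(l-j)!$ collapses to $1/(l-j-1)!$; in the second, $(n+k-l)/(n+k-l)!$ collapses to $1/(n+k-l-1)!$. The essential step, and the reason the power $2^{j}$ grows to $2^{j+1}$, is to align these two sums by a shift of the summation index so that they become identical term by term.

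To align them I would first trim the boundaries. In the first sum the $l=k+j$ term carries the factor $G_{k-1}^k(x)$, which vanishes, so its lower limit may be raised to $l=k+j+1$; in the second sum the $l=n+k$ term carries $1/(-1)!=0$, so its upper limit drops to $l=n+k-1$. Reindexing the second sum by $l\mapsto l-1$ then makes it coincide exactly with the first, and adding the two identical sums yields the factor $2\cdot 2^{j}=2^{j+1}$. Relabeling $l-j-1=l-(j+1)$ and $k+j+1=k+(j+1)$ reproduces \eqref{eqn14} with $j+1$ in place of $j$, closing the induction.

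I expect the only real obstacle to be careful bookkeeping of the two boundary terms: one must invoke $G_{k-1}^k(x)=0$ at the lower end and the convention $1/(-1)!=0$ at the upper end, precisely so that after the index shift the two sums match exactly rather than merely up to edge corrections. No identity beyond the derivative rule \eqref{eqn3} and these two vanishing contributions is required.
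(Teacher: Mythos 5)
Your proposal is correct and takes essentially the same route as the paper: the paper computes $p'$, $p''$, $p'''$ by exactly the mechanism you describe (product rule with \eqref{eqn3}, dropping the $G_{k-1}^k(x)=0$ term at the lower boundary and the vanishing term at the upper boundary, shifting the summation index, and merging the two identical sums into a factor of $2$), and then concludes ``by induction.'' If anything, your write-up is more complete, since you actually carry out the general inductive step that the paper only asserts.
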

\begin{thrm}\label{thm3}
The following relation holds:
\begin{align*}
\sum_{l=k}^{n+k}\frac{1}{l!(n+k-l)!}G_{l}^k(x)x^{n+k-l}=\sum_{j=k}^{n+k}\frac{2^{j-k-1}}{j!}\left(\sum_{l=j}^{n+k}\frac{(-1)^{l-j}G_{l+k-j}^k(k-1)}{(l+k-j)!(n+k-l)!}+\frac{G_{n+2k-j}^k}{(n+2k-j)!}\right).
\end{align*}
\end{thrm}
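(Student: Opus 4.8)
The plan is to reproduce the scheme of Theorem \ref{thm2}, but with the weighted polynomial \eqref{eqn13} in place of the unweighted \eqref{eqn10} and with Lemma \ref{lem111} furnishing the derivatives in place of Lemma \ref{lem2}. Each summand of \eqref{eqn13} has degree $(l-k)+(n+k-l)=n$, so $p\in\mathcal{P}_n$ and may be written in the order-$k$ Euler basis as $p(x)=\sum_{j=0}^{n}b_jE_j^k(x)$. To isolate $b_j$ I would use the same device that produced \eqref{eqnn2}: differentiating the expansion $j$ times with $\frac{d}{dx}E_i^k(x)=iE_{i-1}^k(x)$ and using $E_0^k(x)=1$ gives $p^{(j)}(1)+p^{(j)}(0)=2\,b_j\,j!$, whence $b_j=\frac{1}{2\,j!}\bigl[p^{(j)}(1)+p^{(j)}(0)\bigr]$.

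The new input is Lemma \ref{lem111}. Substituting \eqref{eqn14} and evaluating the two boundary terms separately, at $x=1$ every summand survives,
\[
p^{(j)}(1)=2^{j}\sum_{l=k+j}^{n+k}\frac{G_{l-j}^k(1)}{(l-j)!\,(n+k-l)!},
\]
whereas at $x=0$ the factor $x^{n+k-l}$ kills every term except the one with $l=n+k$, leaving only the constant contribution
\[
p^{(j)}(0)=\frac{2^{j}}{(n+k-j)!}\,G_{n+k-j}^k.
\]
Adding these and dividing by $2\,j!$ yields $b_j$ with leading factor $2^{j-1}/j!$ and the two pieces joined by a plus sign, already the shape of the bracket in the claim.

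The last step passes from $G_{l-j}^k(1)$ to $G_{l-j}^k(k-1)$ via the reflection identity \eqref{eqn7}, which contributes the sign $(-1)^{(l-j)+k}$ and completes the order-$k$ Euler expansion $p(x)=\sum_{j=0}^{n}b_jE_j^k(x)$. To match the form displayed in the statement I would reindex the outer sum by the shift $j\mapsto j-k$, sending $0\le j\le n$ to $k\le j\le n+k$; this turns $2^{j-1}$ into $2^{j-k-1}$, moves the inner lower limit to $l=j$, replaces the subscripts $l-j$ and $n+k-j$ by $l+k-j$ and $n+2k-j$, and---because the exponent of the sign changes by the even quantity $2k$---reduces $(-1)^{(l-j)+k}$ to $(-1)^{l-j}$, so that the coefficient of $E_{j-k}^k(x)$ is precisely the bracketed expression in the theorem. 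Equating this expansion with the defining sum \eqref{eqn13} then gives the asserted identity.

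I expect the one genuinely delicate point to be bookkeeping rather than analysis: confirming that evaluation at $x=0$ isolates the single term $l=n+k$ in \eqref{eqn14}, and then tracking the index shift so that the power of two, the two factorials in the inner summand, and the parity of the sign all come out in the positions shown. No ingredient beyond Lemma \ref{lem111}, the Euler differentiation rule, and the reflection \eqref{eqn7} is needed.
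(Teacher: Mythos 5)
Your route is genuinely different from the paper's. The paper never expands the polynomial $p(x)$ of \eqref{eqn13} in the Euler basis at all: it writes $p(x)=\sum_{l=k}^{n+k}c_l\,G_l^k(x)$ in the \emph{Genocchi} basis (its equation \eqref{eqn15}), uses $G_k^k(x)=k!\,E_0^k(x)$ to arrive at $p^{(j-k)}(x)=c_j\,j!\,E_0^k(x)$, hence $c_j=\frac{1}{2j!}\bigl[p^{(j-k)}(1)+p^{(j-k)}(0)\bigr]$, and only then applies Lemma \ref{lem111} with derivative order $j-k$. That mixed indexing --- the power $2^{j-k}$ coming from the lemma while the division by $j!$ comes from the extraction --- is precisely what produces the printed prefactor $\frac{2^{j-k-1}}{j!}$. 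Up to your last step your computation is sound and runs parallel to this: the formula for $b_j$, the evaluations at $x=1$ and at $x=0$ (where indeed only the $l=n+k$ term survives), and the use of \eqref{eqn7} together with $(-1)^{2k}=1$ all check out.

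The genuine gap is in the re-indexing. Under the shift $j\mapsto j-k$ the prefactor $\frac{2^{j-1}}{j!}$ becomes $\frac{2^{j-k-1}}{(j-k)!}$, \emph{not} $\frac{2^{j-k-1}}{j!}$; you tracked the power of $2$ but not the factorial. What your argument actually yields is
\begin{equation*}
p(x)=\sum_{j=k}^{n+k}\frac{2^{j-k-1}}{(j-k)!}\left(\sum_{l=j}^{n+k}\frac{(-1)^{l-j}G_{l+k-j}^k(k-1)}{(l+k-j)!(n+k-l)!}+\frac{G_{n+2k-j}^k}{(n+2k-j)!}\right)E_{j-k}^k(x),
\end{equation*}
whose coefficients differ from the theorem's by the factor $j!/(j-k)!$, so your claim that the re-indexed coefficient ``is precisely the bracketed expression in the theorem'' fails as written. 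The missing ingredient is exactly relation \eqref{eqn00}: $G_j^k(x)=\frac{j!}{(j-k)!}E_{j-k}^k(x)$, equivalently $\frac{1}{(j-k)!}E_{j-k}^k(x)=\frac{1}{j!}G_j^k(x)$, which converts your Euler expansion term by term into the paper's Genocchi expansion and restores the $1/j!$; with that one line added, your proof becomes a correct (and equivalent) alternative to the paper's. Two caveats that cut the paper slack rather than you: the statement as printed, like the final display of the paper's own proof, omits the basis polynomials $G_j^k(x)$ altogether, so its right-hand side is literally a constant; and the extraction formula \eqref{eqnn2} on which you both rely requires $E_m^k(1)+E_m^k(0)=2\delta_{m,0}$, which holds only for $k=1$ (for $k\ge 2$ this sum equals $2E_m^{k-1}$), a defect inherited from the paper rather than introduced by you.
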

\begin{proof}
Using the fact that the set of Genocchi polynomials of higher order forms a basis for the space of polynomials $\mathcal{P}_n$, 
\begin{equation}\label{eqn15}
p(x)=\sum_{l=k}^{n+k}c_lG_{l}^k(x).
\end{equation}
Applying the first derivative yields
\begin{align*}
p^{\prime}(x)&=\sum_{l=k}^{n+k}c_llG_{l-1}^k(x)=\sum_{l=k+1}^{n+k}c_l\frac{l!}{(l-1)!}G_{l-1}^k(x),\\
p^{\prime\prime}(x)&=\sum_{l=k+1}^{n+k}c_l\frac{l!}{(l-1)!}(l-1)G_{l-2}^k(x)=\sum_{l=k+2}^{n+k}c_l\frac{l!}{(l-2)!}G_{l-2}^k(x),\\
p^{\prime\prime\prime}(x)&=\sum_{l=k+2}^{n+k}c_l\frac{l!}{(l-2)!}(l-2)G_{l-3}^k(x)=\sum_{l=k+3}^{n+k}c_l\frac{l!}{(l-3)!}G_{l-3}^k(x),\\
&\vdots\\
 p^{(n)}(x)&=\sum_{l=k+n}^{n+k}c_l\frac{l!}{(l-n)!}G_{l-n}^k(x)\\
 &=c_{n+k}\frac{(n+k)!}{((n+k)-n)!}G_{(n+k)-n}^k(x)\\
 &=c_{n+k}\frac{(n+k)!}{k!}G_{k}^k(x).
 \end{align*}
From equation \eqref{eqn00}, 
$$G_{k}^k(x)=k!E_0^k(x)\;\;\mbox{and}\;\;G_{k+1}^k(x)=(k+1)!E_1^k(x).$$
Thus,
$$p^{(n)}(x)=c_{n+k}\frac{(n+k)!}{k!}k!E_{0}^k(x)=c_{n+k}(n+k)!E_{0}^k(x).$$
Replacing $n$ with $j-k$ yields
$$p^{(j-k)}(x)=c_{j}j!E_{0}^k(x).$$
Then, we have
\begin{align*}
p^{(j-k)}(0)+p^{(j-k)}(1)&=c_{j}j![E_{0}^k(0)+E_{0}^k(1)]\\
&=c_{j}j![1+1]=2c_{j}j!
\end{align*}
Thus, 
\begin{align*}
c_j&=\frac{1}{2j!}[p^{(j-k)}(1)+p^{(j-k)}(0)]\\
&=\frac{1}{2j!}\left(2^{j-k}\sum_{l=j}^{n+k}\frac{1}{(l-(j-k))!(n+k-l)!}G_{l-(j-k)}^k(1)1^{n+k-l}\right.\\
&\;\;\;\;\;\;\;\left.+2^{j-k}\sum_{l=j}^{n+k}\frac{1}{(l-(j-k))!(n+k-l)!}G_{l-(j-k)}^k(0)0^{n+k-l}\right)\\
&=\frac{1}{2j!}\left(2^{j-k}\sum_{l=j}^{n+k}\frac{1}{(l+k-j)!(n+k-l)!}G_{l+k-j}^k(1)\right.\\
&\;\;\;\;\;\;\;\left.+2^{j-k}\frac{1}{((n+k)+k-j)!(n+k-(n+k))!}G_{(n+k)+k-j}^k\right)\\
&=\frac{1}{2j!}\left(2^{j-k}\sum_{l=j}^{n+k}\frac{1}{(l+k-j)!(n+k-l)!}(-1)^{l+2k-j}G_{l+k-j}^k(k-1)\right.\\
&\;\;\;\;\;\;\;\left.+2^{j-k}\frac{1}{(n+2k-j)!}G_{n+2k-j}^k\right).
\end{align*}
Substituting this to equation \eqref{eqn15} yields
\begin{align*}
p(x)=\sum_{j=k}^{n+k}\frac{2^{j-k-1}}{j!}\left(\sum_{l=j}^{n+k}\frac{(-1)^{l-j}G_{l+k-j}^k(k-1)}{(l+k-j)!(n+k-l)!}+\frac{G_{n+2k-j}^k}{(n+2k-j)!}\right).
\end{align*}
\end{proof}

\smallskip
Using the fact that the set of Bernoulli polynomials of higher order forms a basis for the space of polynomials $\mathcal{P}_n$, the following theorem is established.
\begin{thrm}\label{thm4}
The following relation holds:
\begin{align*}
\sum_{l=k}^{n+k}\frac{G_{l}^k(x)}{l!(n+k-l)!}x^{n+k-l}&=\sum_{j=k}^{n+k}\frac{2^{j-1}}{j!}\left(\sum_{l=k+j-1}^{n+k}\frac{(-1)^{l-j+1+k}G_{l-j+1}^k(k-1)B_{j}^k(x)}{(l-j+1)!(n+k-l)!}\right.\\
&\;\;\;\;\;\;\;\left.-\frac{G_{n+k-j+1}^kB_{j}^k(x)}{(n+k-j+1)!}\right).
\end{align*}
\end{thrm}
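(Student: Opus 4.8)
The plan is to follow the template of Theorem~\ref{thm1}, but applied to the factorially weighted polynomial $p(x)$ of \eqref{eqn13}, whose derivatives are now governed by Lemma~\ref{lem111} instead of Lemma~\ref{lem2}. Since $\{B_0^k(x),\dots,B_n^k(x)\}$ is a basis for $\mathcal{P}_n$, I would first write $p(x)=\sum_j a_j B_j^k(x)$ and recover each $a_j$ by the same differentiate-and-evaluate device used in Theorem~\ref{thm1}. Differentiating $j-1$ times with $\frac{d}{dx}B_m^k(x)=mB_{m-1}^k(x)$, then using $B_0^k(x)=1$, $B_1^k(x)=x-\frac{k}{2}$, and forming the difference at the endpoints $x=1$ and $x=0$, the $B_0^k$ contribution cancels while $B_1^k(1)-B_1^k(0)=1$ survives, giving the extraction formula \eqref{eqnn1}, namely $a_j=\frac{1}{j!}\bigl[p^{(j-1)}(1)-p^{(j-1)}(0)\bigr]$.

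The second step is to insert the $(j-1)$st derivative furnished by Lemma~\ref{lem111},
\[
p^{(j-1)}(x)=2^{\,j-1}\sum_{l=k+j-1}^{n+k}\frac{G_{l-j+1}^k(x)\,x^{n+k-l}}{(l-j+1)!\,(n+k-l)!},
\]
and evaluate at the two endpoints. At $x=1$ the entire sum persists, producing $\sum_{l=k+j-1}^{n+k}\frac{G_{l-j+1}^k(1)}{(l-j+1)!(n+k-l)!}$; at $x=0$ the factor $x^{n+k-l}$ annihilates every term except $l=n+k$, leaving only $\frac{G_{n+k-j+1}^k}{(n+k-j+1)!}$. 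This is precisely where the factorial weighting built into \eqref{eqn13} earns its keep, since it is what collapses the derivative prefactor into the clean power $2^{\,j-1}$ appearing in the statement.

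The third step is the symmetry reduction: by \eqref{eqn7} in the form $G_{l-j+1}^k(1)=(-1)^{\,l-j+1+k}G_{l-j+1}^k(k-1)$, each value at $x=1$ is rewritten as a value at $x=k-1$, introducing the sign $(-1)^{l-j+1+k}$. Pulling the common factor $\frac{2^{\,j-1}}{j!}$ to the front then yields
\[
a_j=\frac{2^{\,j-1}}{j!}\left(\sum_{l=k+j-1}^{n+k}\frac{(-1)^{\,l-j+1+k}G_{l-j+1}^k(k-1)}{(l-j+1)!\,(n+k-l)!}-\frac{G_{n+k-j+1}^k}{(n+k-j+1)!}\right),
\]
and substituting these coefficients into $p(x)=\sum_j a_jB_j^k(x)$ reproduces the asserted identity, with the $B_j^k(x)$ factor attached to each summand.

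I expect the main obstacle to be the rigorous justification of the extraction formula \eqref{eqnn1} in the \emph{higher-order} Bernoulli setting. The argument in Theorem~\ref{thm1} controls only the top two terms of $p^{(j-1)}(x)$ through the ``replace $n$ by $j$'' step, whereas here the difference $B_m^k(1)-B_m^k(0)=mB_{m-1}^{k-1}$ fails to vanish for $m\ge 2$; consequently one must argue that the intermediate derivative terms do not leak into $a_j$, or else replace the heuristic by a direct triangular inversion of the basis $\{B_j^k(x)\}$. Once that point is settled, the remaining bookkeeping — the index shift $l\mapsto l-j+1$, confirming the inner sum starts at $l=k+j-1$ so that $G_{l-j+1}^k$ stays at or above the degree threshold, and tracking the parity $(-1)^{l-j+1+k}$ — is entirely routine given Lemma~\ref{lem111}.
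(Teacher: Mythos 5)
Your proposal reproduces the paper's own proof of this theorem essentially step for step: the paper likewise expands the weighted polynomial $p(x)$ of \eqref{eqn13} in the higher-order Bernoulli basis, extracts coefficients via \eqref{eqnn1}, inserts the $(j-1)$st derivative supplied by Lemma \ref{lem111}, evaluates at $x=1$ and at $x=0$ (where $0^{n+k-l}$ kills every term except $l=n+k$), and converts $G_{l-j+1}^k(1)$ into $(-1)^{l-j+1+k}G_{l-j+1}^k(k-1)$ by \eqref{eqn7}. So as a match to the published argument, it is exact.

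One remark on the obstacle you raise in your final paragraph: your worry is well-founded, but it is a defect of the paper's argument, not something your proposal adds. The paper obtains \eqref{eqnn1} (in the proof of Theorem \ref{thm1}) by computing $p^{(n-1)}(x)=a_{n-1}(n-1)!\,B_0^k(x)+a_n n!\,B_1^k(x)$ and then ``replacing $n$ with $j$,'' which silently discards the terms $a_m\frac{m!}{(m-j+1)!}B_{m-j+1}^k(x)$ with $m>j$ that genuinely appear in $p^{(j-1)}(x)$ whenever $j<n$. Since $B_r^k(1)-B_r^k(0)=rB_{r-1}^{k-1}$, and this is nonzero for $r\ge 2$ once $k\ge 2$ (for instance $B_2^k(1)-B_2^k(0)=2B_1^{k-1}=-(k-1)$), those discarded terms do contribute to $p^{(j-1)}(1)-p^{(j-1)}(0)$; the extraction formula \eqref{eqnn1} is therefore only justified at $k=1$, where $B_{r-1}^{0}(0)=0^{r-1}$ makes the extra contributions vanish. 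The paper never addresses this, so your proposal is exactly as rigorous as the published proof, and your suggested repair --- a triangular inversion of the basis $\{B_j^k(x)\}$ (the change-of-basis matrix is triangular with nonzero diagonal, so the $a_j$ can be solved for top-down) --- is the natural way to make both arguments, and hence the theorem for $k\ge 2$, correct.
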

\begin{proof}
Write $p(x)$ as linear combination of Bernoulli polynomials of higher order as follows:
\begin{equation}\label{eqn16}
p(x)=\sum_{l=k}^{n+k}a_lB_{l}^k(x).
\end{equation}
Now, using equation \eqref{eqnn1} and Lemma \ref{lem111}, 
\begin{align*}
a_j&=\frac{1}{j!}[p^{(j-1)}(1)-p^{(j-1)}(0)]\\
&=\frac{1}{j!}\left(2^{j-1}\sum_{l=k+j-1}^{n+k}\frac{G_{l-j+1}^k(1)1^{n+k-l}}{(l-j+1)!(n+k-l)!}-2^{j-1}\sum_{l=k+j-1}^{n+k}\frac{G_{l-j+1}^k(0)0^{n+k-l}}{(l-j+1)!(n+k-l)!}\right)\\
&=\frac{2^{j-1}}{j!}\left(\sum_{l=k+j-1}^{n+k}\frac{(-1)^{l-j+1+k}G_{l-j+1}^k(k-1)}{(l-j+1)!(n+k-l)!}-\frac{G_{n+k-j+1}^k}{(n+k-j+1)!(n+k-(n+k))!}\right).
\end{align*}
Hence, equation \eqref{eqn16} can be written as
\begin{align*}
p(x)&=\sum_{j=k}^{n+k}\frac{2^{j-1}}{j!}\left(\sum_{l=k+j-1}^{n+k}\frac{(-1)^{l-j+1+k}G_{l-j+1}^k(k-1)}{(l-j+1)!(n+k-l)!}-\frac{G_{n+k-j+1}^k}{(n+k-j+1)!}\right)B_{j}^k(x),
\end{align*}
which is exactly the identity in the theorem.
\end{proof}

\smallskip
The following corollary is an immediate consequence of Theorem \ref{thm4}.
\begin{corr}
The following equality holds:
\begin{align*}
\sum_{l=k}^{n+k}\frac{G_{l}^k(x)}{l!(n+k-l)!}x^{n+k-l}&=\sum_{j=k}^{n+k}\frac{2^{j-1}}{j!}\Bigl(\sum_{l=k+j-1}^{n+k}\frac{(-1)^{l-j+1+k}(l-j+1)_kE_{l-j+1-k}^k(k-1)B_{j}^k(x)}{(l-j+1)!(n+k-l)!}\Bigr.\\
&\;\;\;\;\;\;\;\Bigl.-\frac{(n+k-j+1)_kE_{n-j+1}^kB_{j}^k(x)}{(n+k-j+1)!}\Bigr).
\end{align*}
\end{corr}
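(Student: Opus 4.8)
The plan is to mirror the argument used for Theorem \ref{thm1}, now expanding the polynomial $p(x)$ of \eqref{eqn13} in the higher-order Bernoulli basis rather than recomputing everything from scratch. First I would write
$$p(x)=\sum_{l=k}^{n+k}a_lB_l^k(x)$$
as in \eqref{eqn16}, and then invoke the coefficient-extraction formula \eqref{eqnn1}, namely $a_j=\frac{1}{j!}\bigl[p^{(j-1)}(1)-p^{(j-1)}(0)\bigr]$, which was established in the proof of Theorem \ref{thm1} from $B_0^k(x)=1$, $B_1^k(x)=x-\frac{k}{2}$, and the derivative rule $\frac{d}{dx}B_n^k(x)=nB_{n-1}^k(x)$. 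The advantage of this route is that all of the analytic work needed for the $(j-1)$th derivative of $p(x)$ is already packaged in Lemma \ref{lem111}, so no fresh induction is required.

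Next I would substitute Lemma \ref{lem111} with $j$ replaced by $j-1$, giving
$$p^{(j-1)}(x)=2^{j-1}\sum_{l=k+j-1}^{n+k}\frac{1}{(l-j+1)!(n+k-l)!}G_{l-j+1}^k(x)x^{n+k-l},$$
and evaluate this at the two endpoints. At $x=1$ the entire sum survives, whereas at $x=0$ the factor $x^{n+k-l}$ annihilates every term except $l=n+k$ (where $0^0=1$), leaving only the single contribution $2^{j-1}G_{n+k-j+1}^k(0)/(n+k-j+1)!$. This endpoint evaluation, together with careful tracking of the index shift $l\mapsto l-j+1$ and the resulting summation range, is the one place where a sign or range slip is easy, so I would treat it as the main bookkeeping obstacle rather than a conceptual one.

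Finally, to recast $G_{l-j+1}^k(1)$ into the form demanded by the statement, I would apply the reflection identity \eqref{eqn7}, $G_m^k(1)=(-1)^{m+k}G_m^k(k-1)$, with $m=l-j+1$, which produces the prefactor $(-1)^{l-j+1+k}$. Assembling the two endpoint pieces yields
$$a_j=\frac{2^{j-1}}{j!}\left(\sum_{l=k+j-1}^{n+k}\frac{(-1)^{l-j+1+k}G_{l-j+1}^k(k-1)}{(l-j+1)!(n+k-l)!}-\frac{G_{n+k-j+1}^k}{(n+k-j+1)!}\right),$$
and substituting these coefficients back into \eqref{eqn16} and summing against $B_j^k(x)$ over $j=k,\ldots,n+k$ gives the claimed identity. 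The only conceptual point worth double-checking is that \eqref{eqnn1} indeed extracts the coefficient of $B_j^k(x)$ for this shifted index range; granting that, as the earlier theorems do, the remainder is a direct computation.
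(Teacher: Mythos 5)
Your derivation is correct as far as it goes, but it stops one step short of the statement: what you end up with is the identity of Theorem \ref{thm4} itself, with coefficients written in terms of the Genocchi quantities $G_{l-j+1}^k(k-1)$ and $G_{n+k-j+1}^k$, whereas the corollary you were asked to prove is stated in terms of higher-order \emph{Euler} polynomials and numbers, namely $(l-j+1)_k E_{l-j+1-k}^k(k-1)$ and $(n+k-j+1)_k E_{n-j+1}^k$. Your closing sentence asserts that substituting your $a_j$ back into the expansion ``gives the claimed identity,'' but your $a_j$ contains no Euler polynomials at all, so it does not. The missing ingredient is the Genocchi--Euler conversion \eqref{eqn00} (equivalently \eqref{eqn011} and \eqref{eqn012}): $G_m^k(z)=\frac{m!}{(m-k)!}E_{m-k}^k(z)=(m)_k E_{m-k}^k(z)$. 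Applying it with $m=l-j+1$, $z=k-1$ gives $G_{l-j+1}^k(k-1)=(l-j+1)_k E_{l-j+1-k}^k(k-1)$, and with $m=n+k-j+1$, $z=0$ gives $G_{n+k-j+1}^k=(n+k-j+1)_k E_{n-j+1}^k$; inserting these two substitutions into your expression for $a_j$ is what actually produces the stated equality.

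This also clarifies how your route compares with the paper's: the paper proves Theorem \ref{thm4} exactly as you do, via \eqref{eqnn1} and Lemma \ref{lem111}, and then obtains the corollary in one line by the conversion above. So your work amounts to a correct re-derivation of the theorem, while the step you omitted is precisely what the paper regards as the proof of the corollary. Your side remark about whether \eqref{eqnn1} legitimately extracts coefficients for the shifted index range $j=k,\ldots,n+k$ is a fair concern, but the paper's own proof glosses over the same point, so it is not a defect peculiar to your argument; the substantive gap is the absent Euler conversion.
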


\smallskip
Using the fact that the set of Euler polynomials of higher order forms a basis for the space of polynomials $\mathcal{P}_n$,  the following theorem is proved.
\begin{thrm}\label{thm4}
The following relation holds:
\begin{align*}
\sum_{l=k}^{n+k}\frac{G_{l}^k(x)}{l!(n+k-l)!}x^{n+k-l}==\sum_{j=k}^{n+k}\frac{2^{j-1}}{j!}\left(\sum_{l=k+j}^{n+k}\frac{(-1)^{l-j+k}G_{l-j}^k(k-1)}{(l-j)!(n+k-l)!}-\frac{G_{n+k-j}^k}{(n+k-j)!}\right)E_{j}^k(x)
\end{align*}
\end{thrm}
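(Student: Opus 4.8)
The plan is to repeat, for the factorial-weighted polynomial $p(x)$ of \eqref{eqn13}, the Euler-basis argument already carried out in Theorem \ref{thm2}, the only difference being that the relevant derivative formula is now Lemma \ref{lem111} rather than Lemma \ref{lem2}. Since a shifted family of Euler polynomials of order $k$ serves as a basis for $\mathcal{P}_n$ exactly as in the Bernoulli computation, I would first write $p(x)=\sum_{j=k}^{n+k}b_jE_j^k(x)$ and recover each coefficient $b_j$ through the extraction identity \eqref{eqnn2}, namely $b_j=\frac{1}{2j!}\bigl[p^{(j)}(1)+p^{(j)}(0)\bigr]$. This identity was established in the proof of Theorem \ref{thm2} from $E_0^k(x)=1$ and the repeated differentiation of an Euler expansion, so it may be invoked verbatim here.

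Next I would substitute the closed form of $p^{(j)}(x)$ supplied by Lemma \ref{lem111}, namely $p^{(j)}(x)=2^j\sum_{l=k+j}^{n+k}\frac{1}{(l-j)!(n+k-l)!}G_{l-j}^k(x)x^{n+k-l}$, and evaluate it at the two endpoints. At $x=1$ each factor $1^{n+k-l}$ equals $1$, so the full inner sum survives; at $x=0$ the factor $0^{n+k-l}$ annihilates every term except the one with $l=n+k$, leaving only $2^j\,G_{n+k-j}^k(0)/(n+k-j)!$. Using $G_{n+k-j}^k(0)=G_{n+k-j}^k$ together with the reflection formula \eqref{eqn7}, which rewrites $G_{l-j}^k(1)=(-1)^{\,l-j+k}G_{l-j}^k(k-1)$, and collapsing the prefactor via $\frac{1}{2j!}\cdot 2^j=\frac{2^{j-1}}{j!}$, the coefficient $b_j$ takes precisely the bracketed shape appearing in the statement. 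Re-inserting the $b_j$ into $p(x)=\sum_{j=k}^{n+k}b_jE_j^k(x)$ then yields the asserted identity.

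The routine part is the endpoint evaluation and the factorial/power-of-two bookkeeping; the two places to watch are the index range of the inner sum after differentiating $j$ times (its lower limit shifts from $k$ to $k+j$) and the parity exponent $l-j+k$ carried by the sign in \eqref{eqn7}. The single genuine subtlety, and the main obstacle, is the sign with which the isolated $x=0$ term enters: because the Euler extraction formula \eqref{eqnn2} \emph{adds} $p^{(j)}(1)$ and $p^{(j)}(0)$ (in contrast to the Bernoulli formula \eqref{eqnn1}, which subtracts them), the sign attached to the isolated $G_{n+k-j}^k$ term is governed entirely by that additive combination, and this is exactly the feature that must be tracked to distinguish this Euler identity from its Bernoulli counterpart. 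Everything else is a direct transcription of the computations in Theorem \ref{thm2} and Theorem \ref{thm3}.
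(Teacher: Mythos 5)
Your proposal reproduces the paper's own proof essentially line for line: the expansion $p(x)=\sum_{j=k}^{n+k}b_jE_j^k(x)$, the extraction formula \eqref{eqnn2}, the evaluation of Lemma \ref{lem111} at $x=1$ and $x=0$ (with only the $l=n+k$ term surviving at $0$), and the reflection \eqref{eqn7}. But there is a genuine gap, and it sits exactly at the point you yourself single out as ``the main obstacle.'' Because \eqref{eqnn2} \emph{adds} $p^{(j)}(1)$ and $p^{(j)}(0)$, the isolated $x=0$ term enters $b_j$ with a \emph{plus} sign:
\[
b_j=\frac{2^{j-1}}{j!}\left(\sum_{l=k+j}^{n+k}\frac{(-1)^{l-j+k}G_{l-j}^k(k-1)}{(l-j)!(n+k-l)!}\;+\;\frac{G_{n+k-j}^k}{(n+k-j)!}\right),
\]
whereas the bracket in the statement carries a \emph{minus} sign on that term. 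So your claim that $b_j$ ``takes precisely the bracketed shape appearing in the statement'' is false: carried out honestly, your argument proves the identity with $+\,G_{n+k-j}^k/(n+k-j)!$, not the stated one. The additive combination you correctly identify as governing the sign can only produce the plus; a minus would come from the Bernoulli-type extraction \eqref{eqnn1}, which is not what is in play here.

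For what it is worth, the paper's own proof contains the identical inconsistency: its displayed computation of $b_j$ ends with $+\,G_{n+k-j}^k/(n+k-j)!$, and the final assembled formula then silently flips this to a minus. Comparison with Theorem \ref{thm2}, where the same additive extraction correctly yields $+\,G_{n+k-j}^k$ in the final statement, strongly suggests that the minus in the present theorem is a typo imported from its Bernoulli counterpart, and that the identity your argument actually establishes (with the plus) is the intended one; but a complete write-up must either correct the statement or explain the sign, and your proposal, like the paper, does neither. A further defect you inherit from the paper without comment: the family $\{E_j^k(x)\}_{j=k}^{n+k}$ consists of polynomials of degrees $k,\dots,n+k$, so for $k\geq 1$ it is \emph{not} a basis of $\mathcal{P}_n$ (the correct Euler expansion, as in Theorem \ref{thm2}, runs over $j=0,\dots,n$), which means the opening step of the proof needs repair before the coefficient extraction can even be justified.
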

\begin{proof}
Write $p(x)$ as linear combination of Bernoulli polynomials of higher order as follows:
\begin{equation}\label{eqn16}
p(x)=\sum_{l=k}^{n+k}b_lE_{l}^k(x).
\end{equation}
Now, using equation \eqref{eqnn2} and Lemma \ref{lem111},
\begin{align*}
b_j&=b_j=\frac{1}{2j!}[p^{(j)}(1)+p^{(j)}(0)]\\
&=\frac{1}{2j!}\left(2^{j}\sum_{l=k+j}^{n+k}\frac{G_{l-j}^k(1)1^{n+k-l}}{(l-j)!(n+k-l)!}+2^{j}\sum_{l=k+j}^{n+k}\frac{G_{l-j}^k(0)0^{n+k-l}}{(l-j)!(n+k-l)!}\right)\\
&=\frac{1}{2j!}\left(2^{j}\sum_{l=k+j}^{n+k}\frac{G_{l-j}^k(1)}{(l-j)!(n+k-l)!}+2^{j}\frac{G_{n+k-j}^k}{(n+k-j)!}\right).
\end{align*}
Hence, equation \eqref{eqn16} can be written as
\begin{align*}
p(x)&=\sum_{j=k}^{n+k}\frac{2^{j-1}}{j!}\left(\sum_{l=k+j}^{n+k}\frac{(-1)^{l-j+k}G_{l-j}^k(k-1)}{(l-j)!(n+k-l)!}-\frac{G_{n+k-j}^k}{(n+k-j)!}\right)E_{j}^k(x),
\end{align*}
which is exactly the identity in the theorem.
\end{proof}

\smallskip
The following corollary is an immediate consequence of Theorem \ref{thm4}.
\begin{corr}
The following equality holds:
\begin{align*}
\sum_{l=k}^{n+k}\frac{G_{l}^k(x)}{l!(n+k-l)!}x^{n+k-l}&=\sum_{j=k}^{n+k}\frac{2^{j-1}}{j!}\Bigl(\sum_{l=k+j-1}^{n+k}\frac{(-1)^{l-j+1+k}(l-j)_kE_{l-j-k}^k(k-1)E_{j}^k(x)}{(l-j)!(n+k-l)!}\Bigr.\\
&\;\;\;\;\;\;\;\Bigl.-\frac{(n+k-j)_kE_{n-j}^kE_{j}^k(x)}{(n+k-j)!}\Bigr).
\end{align*}
\end{corr}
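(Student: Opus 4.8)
The plan is to obtain this corollary as a direct specialization of the Euler-basis expansion established in the preceding theorem, converting every higher-order Genocchi quantity into a higher-order Euler quantity by means of \eqref{eqn00}. No new facts about bases or derivatives are required: the corollary is simply the image of that theorem under the Genocchi--Euler dictionary supplied by \eqref{eqn011} and \eqref{eqn012}.

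Concretely, I would begin from the conclusion of the preceding theorem,
$$\sum_{l=k}^{n+k}\frac{G_{l}^k(x)}{l!(n+k-l)!}x^{n+k-l}=\sum_{j=k}^{n+k}\frac{2^{j-1}}{j!}\left(\sum_{l=k+j}^{n+k}\frac{(-1)^{l-j+k}G_{l-j}^k(k-1)}{(l-j)!(n+k-l)!}-\frac{G_{n+k-j}^k}{(n+k-j)!}\right)E_{j}^k(x),$$
and replace the two Genocchi symbols on its right-hand side. By \eqref{eqn00}, for any argument $z$ one has $G_m^k(z)=\frac{m!}{(m-k)!}E_{m-k}^k(z)=(m)_kE_{m-k}^k(z)$, where $(m)_k=m(m-1)\cdots(m-k+1)$ is the falling factorial. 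Setting $m=l-j$ with $z=k-1$, which is exactly \eqref{eqn012}, sends $G_{l-j}^k(k-1)$ to $(l-j)_kE_{l-j-k}^k(k-1)$; setting $m=n+k-j$ with $z=0$, which is \eqref{eqn011}, sends $G_{n+k-j}^k$ to $(n+k-j)_kE_{n-j}^k$. Substituting both termwise, while leaving the scalar prefactors $\frac{2^{j-1}}{j!}$, the parity factors, the summation ranges, and the trailing $E_j^k(x)$ untouched, yields the asserted identity.

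I do not anticipate a genuine obstacle: the whole argument is a line-by-line substitution, and each application of \eqref{eqn00} merely multiplies a term by the scalar $(m)_k$ and lowers a Genocchi index by $k$. The only points demanding care are the two subscript computations $(l-j)-k=l-j-k$ and $(n+k-j)-k=n-j$, and the observation that, because the conversion scales terms individually, it can neither shift a summation range nor flip a sign; consequently the exponent of $(-1)$ and the lower summation limit appearing in the final statement must be read off directly from the theorem rather than introduced during the substitution.
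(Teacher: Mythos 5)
Your proposal coincides with the paper's own treatment: the paper gives no computation for this corollary, asserting only that it is an immediate consequence of the preceding theorem via \eqref{eqn011} and \eqref{eqn012}, which is exactly your termwise substitution $G_m^k(z)=(m)_k\,E_{m-k}^k(z)$ from \eqref{eqn00}, applied with $m=l-j$, $z=k-1$ and with $m=n+k-j$, $z=0$. You are also right on the one delicate point: since the substitution only rescales individual terms, it preserves the theorem's lower limit $l=k+j$ and sign $(-1)^{l-j+k}$, so the printed corollary's $l=k+j-1$ and $(-1)^{l-j+1+k}$ are misprints inherited from the Bernoulli-basis corollary rather than anything a correct derivation should reproduce.
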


\section{Conclusion and Recommendation}
In this paper, the Bernoulli, Euler and Genocchi basis were used to derive some identities of Genocchi polynomials of higher order. Also, the results were presented in terms of Bernoulli and Euler polynomials since these polynomials are closely related to Genocchi polynomials. For further studies, the following are recommended by the authors: (1) To obtain some identities of Genocchi Polynomials of complex order $k$; (2) To generalize the results, parallel to this paper, for Apostol Genocchi
Polynomials and higher order Apostol Genocchi Polynomials.

\end{document}